\newtheorem{defn}{Definition}
\newtheorem{prop}{Proposition}
\newtheorem{thm}{Theorem}
\begin{document}

\title{\bf\large\MakeUppercase{%
Degenerate Fourier transform associated with the Sturm-Liouville operator
}}

\author{A.\,V.~Gorshkov}
\date{}

\maketitle

\makeatletter
\renewcommand{\@makefnmark}{}
\makeatother
Most of the known Fourier transforms associated with the equations of mathematical physics have a trivial kernel, and an inversion formula as well as the Parseval equality are fulfilled. In other words, the system of the eigenfunctions involved in the definition of the integral transform is complete.

However, in some cases, the differential operator, in addition to the continuous part of the spectrum that defines this transform, may contain a set of eigenfunctions $\{e_k\}$, and the Parseval's equality takes the form
\begin{eqnarray}\label{parseval}
\|f\|^2 = \|F[f]\|^2+\sum_k (f,e_k)^2,
\end{eqnarray}
where $\{e_k\}$ become the elements of the kernel of  $F$. In that case $F$ will be called as a {\it degenerate transform}.

For example, the sine-Fourier transform 
$$
\hat f(\lambda) =\sqrt \frac 2 \pi \int_0^\infty \sin (\lambda s) f(s) ds 
$$
is based on the eigen functions of $A=d^2/dx^2$ in $L_2(0,\infty)$ with the Dirichlet condition $f(0)=0$. The spectrum of the operator is continuous and fills the entire negative half-axis: $\sigma_c=(-\infty,0]$.  This transform is not degenerate, and the inversion formula has the form
$$
f(x) = \frac 2 \pi \int_0^\infty \sin (\lambda x) \left ( \int_0^\infty \sin (\lambda s) f(s) ds \right ) d\lambda.
$$

Functions $\varphi(x, \lambda) = \sin(\lambda x)$ are the {\it generalized eigenfunctions} (i.e. not belonging to the main space $L_2(0,\infty)$, where the operator is set):
$$\partial_{xx}\varphi(x, \lambda) = -\lambda^2 \varphi(x, \lambda).$$

Similarly, the same operator with the Neumann condition generates a co\-sine Fourier transform. However, this operator with a Robin boundary condition $f'(0) + a f(0) = 0$
with $a>0$, in addition to its generalized eigen functions  contains an ordinary eigenfunction $e^{-ax}$ with an eigenvalue of $a^2$. Its spectrum consists of a continuous part $\sigma_c=(-\infty,0]$ and an eigenvalue $\lambda=a^2$. The Fourier transform generated by this operator will already have a nontrivial kernel containing this function (see, for example, \cite{Tm1}). However, the mixed boundary condition with $a>0$ refers to {\it non-physical}, and, as a rule, is not considered in differential equations.

Another important example of a family of operators with degenerate transform are
$$
\Delta_k  = \frac 1r \frac {\partial}{\partial r}\left(r \frac {\partial}{\partial r}\right) - \frac{k^2}{r^2},~k\in \mathbb{R}.
$$
For integers $k$, these operators are the Fourier coefficients of the Laplacian when decomposed into a Fourier series by an angular variable in polar coordinates.

These operators with a mixed boundary condition
$$
r_0\frac{\partial w(t,r)}{\partial r}\Big|_{r=r_0} \pm k w(t,r_0) = 0
$$
in addition to the continuous spectrum $\sigma_c=(-\infty,0)$ also have an eigenvalue $\lambda=0$ from the kernel of $\Delta_k$ (here $r_0>0$ is a fixed number that can be treated as the radius of the circle). The eigenfunctions from the kernel are $1/r^{\pm k}$ (for certain values of $k$). They also produce a degenerate transformation, which is a generalization of the integral Weber transform, the properties of which were investigated by the author in \cite{G1}.

And these transforms, precisely as a degenerate ones, had found an application in mathematical physics. With its help, the author obtained a solution to the Stokes problem in the exterior of the circle (see \cite{G2}). All this confirms  the  importance of such a degenerate transforms not only for the theory of functions, but also as having significant practical importance.

\vskip 10pt

Let the Sturm-Liouville operator $Af(x) = f'' - q(x)f$, defined on the semiaxis $x\in(0,\infty)$, have a spectrum $\sigma(A)$, which consists of a continuous part $E\subset\mathbb{R}$, and no bigger than a countable set of eigenvalues $\{\lambda_k\}$ of the finite multiplicity  with eigenfunctions $\{e_k\}$. As is known(\cite{Tm1}\cite{L1}), if $q(x)$ is continuous on $\mathbb{R_+}$, then there exist the generalized eigenfunctions $\varphi(x,\lambda)$ specifying the Fourier transform
$$
F[f]=\int_0^\infty \varphi (x,\lambda)f(x)dx,~A\varphi (x,\lambda) = \lambda \varphi (x,\lambda).
$$
At the same time, there is a spectral function $\rho(\lambda)$ such that $F[f]$ belong to the space $L_2$ with the weight $\rho(\lambda)$: $F[f] \in L_2\left (E,\rho(\lambda)\right)$.

\begin{defn} Let's call the functions $\varphi(x,\lambda)$ to be   {\it orthonormal}, denoting as $\left (\varphi (\cdot,\lambda), \varphi (\cdot,\zeta)\right )$ $=$ $\delta(\lambda-\zeta)$ if
\begin{align} \label{ortcond}
\hat f(\lambda) = \int_0^\infty \varphi (x,\lambda) \left (
\int_E \varphi (x,\zeta) \hat f(\zeta) d\rho(\zeta) \right ) dx
\end{align}
for any function $\hat f(\lambda)$ of the form $\hat f(\lambda) = F[f(\cdot)](\lambda)$, $f(x)\in L_2(\mathbb{R_+})$.
\end{defn}
Equality in this definition follows from the formal applying of the $\delta$-function $\delta(\lambda-\zeta)$ to $\hat f(\lambda)$.

\begin{defn}We will say that $\left\{ \{\varphi (\cdot,\lambda)\} ,\{e_k\}\right\}$ is a complete orthonormal system of eigenfunctions of the operator $A$ if $\left (\varphi (\cdot,\lambda), \varphi (\cdot,\zeta)\right ) = \delta(\lambda-\zeta)$,
$\left (\varphi (\cdot,\lambda), e_k\right )_{L_2(\mathbb{R_+})} = 0$,
$\left(e_k, e_j\right )_{L_2(\mathbb{R_+})} =\delta_{k,j}$ for any $\lambda, \zeta\in E$, any eigenfunctions $e_k, e_j$, and the Parseval equality (\ref{parseval}) holds.
\end{defn}

The condition $\left (\varphi (\cdot,\lambda), e_k\right )_{L_2(\mathbb{R_+})} = 0$ means that $F$ is the degenerate transform, i.e. $F[e_k]=0$.

\begin{thm}
Let the Sturm-Liouville operator $A$ be a generator of a strongly continuous  semigroup $e^{tA}$ in $L_2(0,\infty)$; its spectrum is real, bounded from above, consists of a continuous part and no bigger than a countable set of eigenvalues $\{e_k\}$, and the resolvent satisfies the estimate $\|R(A,\lambda)\|\leq C/\lambda$ with some $C>0$. Then the system of its eigen functions forms a complete orthonormal system and the following inversion formula holds:
\begin{align} \label{invformula}
f(x) =
\int_{E} \varphi(x,\lambda)\left ( \int_{\mathbb{R_+}} \varphi(s,\lambda) f(s)ds \right ) f(x)d \rho(\lambda) +
\sum_{k} (f,e_k)e_k.
\end{align}
\end{thm}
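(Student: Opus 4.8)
The plan is to reconstruct the full spectral resolution of $A$ from its semigroup and then let $t\to0^{+}$. First I would use that, since $A$ generates the strongly continuous semigroup $e^{tA}$, it admits the Dunford--Bromwich representation
\[
e^{tA}f=\frac{1}{2\pi i}\int_{\gamma-i\infty}^{\gamma+i\infty}e^{t\mu}R(A,\mu)f\,d\mu,\qquad R(A,\mu)=(\mu-A)^{-1},
\]
with $\gamma$ chosen to the right of $\sup\sigma(A)$ (possible because the spectrum is bounded above), and where the estimate $\|R(A,\mu)\|\le C/|\mu|$ guarantees the decay of the integrand needed for convergence. The bridge to the eigenfunctions is the Green's-function form of the resolvent: $R(A,\mu)$ has kernel $G(x,s;\mu)$ assembled from the solution of $(A-\mu)u=0$ satisfying the boundary condition at $0$ and the square-integrable Weyl solution at $\infty$, which is exactly the object of Weyl--Titchmarsh--Kodaira theory and which degenerates, on the real axis, into $\varphi(x,\lambda)$ and the spectral function $\rho$.

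Next I would deform the contour leftward until it collapses onto the real spectrum. Since $\sigma(A)\subset\mathbb{R}$ is bounded above, the resolvent bound lets me discard the connecting arcs and reduce the integral to small loops encircling the isolated eigenvalues $\lambda_k$ together with two rays hugging the continuous part $E$ from above and below. Residue calculus at each $\lambda_k$, whose finite multiplicity ensures a pole with the finite-rank eigenprojection as residue, contributes $e^{t\lambda_k}(f,e_k)e_k$; the jump of the kernel across $E$ is, by the classical Titchmarsh--Kodaira computation identifying the spectral density with the boundary values of the $m$-function,
\[
\frac{1}{2\pi i}\big[G(x,s;\lambda+i0)-G(x,s;\lambda-i0)\big]=\varphi(x,\lambda)\varphi(s,\lambda)\,\rho'(\lambda).
\]
Collecting both contributions gives, with $\widehat f=F[f]$,
\[
e^{tA}f(x)=\int_{E}e^{t\lambda}\varphi(x,\lambda)\widehat f(\lambda)\,d\rho(\lambda)+\sum_k e^{t\lambda_k}(f,e_k)e_k(x).
\]

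Then I would pass to the limit $t\to0^{+}$. Strong continuity gives $e^{tA}f\to f$ in $L_2(\mathbb{R_+})$, while on the right the boundedness of $\sigma(A)$ from above makes the factors $e^{t\lambda}$ and $e^{t\lambda_k}$ uniformly bounded and convergent to $1$, so dominated convergence (using $\widehat f\in L_2(E,\rho)$ and $\sum_k|(f,e_k)|^2<\infty$) produces the inversion formula (\ref{invformula}). Pairing (\ref{invformula}) with $f$ and applying Fubini yields the Parseval identity (\ref{parseval}); the $\delta$-orthonormality $(\varphi(\cdot,\lambda),\varphi(\cdot,\zeta))=\delta(\lambda-\zeta)$ is precisely the continuous part of (\ref{invformula}) read through Definition~1, and $(e_k,e_j)=\delta_{k,j}$ is the standard orthogonality of eigenfunctions for distinct eigenvalues together with a normalization (Gram--Schmidt inside a fixed eigenspace). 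The degeneracy $(\varphi(\cdot,\lambda),e_k)=F[e_k](\lambda)=0$ I would obtain separately from the Lagrange identity: integrating $\varphi\,Ae_k-e_k\,A\varphi$ over $(0,\infty)$ gives $(\lambda_k-\lambda)\int_0^\infty\varphi e_k\,dx=[\varphi e_k'-\varphi'e_k]_0^\infty$, and both boundary terms vanish because $\varphi(\cdot,\lambda)$ and $e_k$ satisfy the same boundary condition at $0$ while $e_k$ decays at $\infty$, so $F[e_k]\equiv0$ on $E$.

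The main obstacle lies in the two analytic steps: the contour collapse and the identification of the branch-cut integrand. The resolvent estimate is exactly the lever for discarding the arcs at infinity, but $R(A,\mu)$ blows up as $\mu$ approaches $E$, so the real difficulty is to control it uniformly near the continuous spectrum and at the edges of $E$, where eigenvalues may accumulate, and then to justify interchanging the limit $t\to0^{+}$ with the spectral integral. A pleasant feature of routing the argument through the semigroup is that completeness comes for free: once $e^{tA}f$ is expressed as the sum of its continuous and discrete parts, strong continuity forces the $t\to0^{+}$ limit to reproduce $f$ itself, so there is no need to rule out a residual spectrum by hand---the regularizing factors $e^{t\lambda}$ do the work that a direct application of Stone's formula would leave delicate.
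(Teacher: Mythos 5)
Your proposal follows essentially the same route as the paper's own proof: represent $e^{tA}$ as a contour integral of the resolvent, collapse the contour onto the spectrum so that the branch-cut jump across $E$ (identified via Titchmarsh--Kodaira theory with $\varphi(x,\lambda)\varphi(s,\lambda)\,d\rho(\lambda)$) gives the continuous part and the residues at the $\lambda_k$ give the finite-rank eigenprojections, then let $t\to 0^{+}$ using strong continuity of the semigroup, with $F[e_k]=0$ proved by the same integration-by-parts (Lagrange identity) argument and orthonormality and Parseval read off from the inversion formula. The only real divergence is in justifying the limit $t\to0^{+}$ for unbounded $E$: you invoke dominated convergence (implicitly using boundedness of $F^{*}$ on $L_2(E,\rho)$), while the paper proves weak convergence against test functions $g\in C_0^\infty$ using a tail estimate based on its Proposition that $\lambda\hat g(\lambda)\in L_2(E,\rho)$ --- both are legitimate at the paper's level of rigor.
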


\begin{proof} Consider the boundary value problem in $L_2(\mathbb{R_+})$:
$$
\partial_t y(t,x) - A y=0, y(0,x)=f(x).
$$

Using the estimate on the resolvent $R(A,\lambda)$ from the conditions of the theorem, the solution $y(t,x)=e^{tA}$ of this equation can be given by the formula
\begin{align*}
y(t,x) = \frac 1{2\pi i}\int_\gamma e^{\lambda t}R(A,\lambda)f(x)d\lambda,
\end{align*}
where the contour $\gamma$ covers the real spectrum of the operator $A$. In the case of an unbounded spectrum $\gamma$ is the boundary of the sector $S_{a,\theta}=\{\lambda\in \mathbb C,~|arg(\lambda -a)|>\theta\}$ with some $a>0$, $\theta\in (\frac\pi 2, \pi)$.

The resolvent has a gap along the set $E$, and the points $\lambda_k$ are its poles. Integral
$$
P_{k} = \frac 1{2\pi i}\int_{|\lambda - \lambda_k|=\varepsilon} R(A,\lambda)d\lambda
$$
for a sufficiently small $\varepsilon$ is a finite-dimensional projector onto the proper subspace $E_k$ (see \cite{Kato}). And then the solution takes the following form:
\begin{align*}
y(t,x) =
\frac 1{2\pi i}\int_{E} e^{\lambda t}\left ( R(A,\lambda -i\cdot 0) - R(A,\lambda + i\cdot 0) \right ) f(x)d\lambda +
\sum_{k} e^{\lambda_k t} (f,e_k)e_k.
\end{align*}

The integral in the last equality defines a family of projectors on the generalized proper subspace \cite{Tm1} and can be expressed in terms of the eigenfunctions $\varphi(x,\lambda)$, giving the final formula for $y(t,x)$:
\begin{align}\label{eqsol}
y(t,x) =
\int_{E} e^{\lambda t} \varphi(x,\lambda)\left ( \int_{\mathbb{R_+}} \varphi(s,\lambda) f(s)ds \right ) f(x)d \rho(\lambda) +
\sum_{k} e^{\lambda_k t} (f,e_k)e_k.
\end{align}

Let's prove the inversion formula (\ref{invformula}). Then it will follow the completeness of the eigen functions. Since $A$ is the generator of the strongly continuous semigroup $e^{tA}$, then $y(t,\cdot)$ $\to f(x)$ strongly at $t\to 0$. If the set of the continuous spectrum $E$ is bounded, then passing to the limit at $t \to 0$ is allowed under the sign of the integral in (\ref{eqsol}) and the inversion formula is proved. If $E$ is not bounded, then we must justify the limit transition at $t\to 0$. It is enough to prove the validity of a limit transition only in the integral part of the equality (\ref{eqsol}). Let's prove weak convergence at $t\to 0$:
\begin{align*}
y(t,x) \rightharpoondown
\int_{E} \varphi(x,\lambda)\left ( \int_{\mathbb{R_+}} \varphi(s,\lambda) f(s)ds \right ) f(x)d \rho(\lambda) +
\sum_{k} (f,e_k)e_k.
\end{align*}

Take an arbitrary $g\in C_0^\infty(r_0,\infty)$. Then
$\hat g(\lambda)=F[g(\cdot)](\lambda)$ will decrease rapidly by $\lambda$. The following is true

\begin{prop}For an arbitrary $g\in C_0^\infty(r_0,\infty)$ holds $\lambda\hat g(\lambda)\in L_2\left (E,\rho(\lambda)\right)$.
\end{prop}

\begin{proof}
Since $A\varphi = \lambda\varphi$, then
\begin{align*}
\hat g(\lambda)=\int_0^\infty \varphi (x,\lambda)g(x)dx=
\frac1{\lambda}\int_0^\infty A[\varphi (x,\lambda)]g(x)dx=
\frac1{\lambda}\int_0^\infty \varphi (x,\lambda) A[g(x)]dx.
\end{align*}
Because $A[g(x)]\in L_2(\mathbb{R_+})$, then
$\int_0^\infty \varphi (x,\lambda) A[g(x)]dx \in L_2\left (E,\rho(\lambda)\right)$ and the proposition is proved.
\end{proof}

Denote $F^*[\hat g(\cdot)](x) = \int_{E} \varphi(x,\lambda) \hat g(\lambda)d\rho(\lambda)$. Then

\begin{align*}
\left (F^* \left [ e^{\lambda t} F[f] \right ], g(\cdot) \right )_{L_2(\mathbb{R_+})} =
\left ( e^{\lambda t}F[f], F[g] \right )_{L_2\left (E,\rho(\lambda)\right)}=
\left ( e^{\lambda t}\hat f(\lambda), \hat g (\lambda) \right )_{L_2\left (E,\rho(\lambda)\right)},
\end{align*}
where $\hat f(\lambda) = F[f](\lambda)$,
$\hat g(\lambda) = F[g] (\lambda)$.

The residuals of integrals
\begin{align*}
\int_{-\infty}^L \left | e^{\lambda t} \hat f(\lambda) \hat g (\lambda) \right | d\rho(\lambda) \leq \int_{-\infty}^L \left | \hat f(\lambda) \hat g (\lambda) \right | d\rho(\lambda)
 \leq \|\hat f(\lambda)\|_{L_2\left (-\infty, L \right )} \| \hat g (\lambda) \lambda \|_{L_2\left (-\infty,L \right )}
\end{align*}
converge to zero uniformly over $t$ at $L\to - \infty$. Consequently, we have proved the validity of the transition at $t\to 0$:
$$
\left ( e^{\lambda t}\hat f(\lambda), \hat g (\lambda) \right )_{L_2\left (E,\rho(\lambda)\right)} \to \left (\hat f(\lambda), \hat g (\lambda) \right )_{L_2\left (E,\rho(\lambda)\right)}.
$$

From the uniqueness of the weak limit, taking into account $y(t,\cdot)\rightarrow f(\cdot)$, the inversion formula is valid almost everywhere
\begin{align} \label{invformula_f}
f(\cdot)=F^*\left [F [f] \right ]+\sum_{k} (f,e_k)e_k,
\end{align}
which is the same formula as (\ref{invformula}).

Let's prove that ${e_k} \in Ker(F)$:
\begin{align*}
\int_0^\infty \varphi (x,\lambda)e_k(x)dx=\frac1{\lambda}
\int_0^\infty A[\varphi (x,\lambda)]e_k(x)dx=\frac1{\lambda}
\int_0^\infty \varphi (x,\lambda)Ae_k(x)dx\\=
\frac{\lambda_k}{\lambda}
\int_0^\infty \varphi (x,\lambda) e_k(x)dx,
\end{align*}
which implies $F[e_k]=0$.  
  
The orthogonality condition (\ref{ortcond}) follows from the inversion formula (\ref{invformula_f}) if we apply the transform $F$ to the latter:
$$
\hat f = F\left [ F^* \hat f \right ].
$$
The Parseval equality (\ref{parseval}) follows from the inversion formula (\ref{invformula}) in virtue of the ortho\-nor\-ma\-li\-ty of $\left\{ \{\varphi (\cdot,\lambda)\} ,\{e_k\}\right\}$. The theorem is  proved.  
\end{proof}

{\bf A.\,V.~Gorshkov}

MSU Faculty of Mechanics and Mathematics.

{\it E-mail}: alexey.gorshkov.msu@gmail.com

\end{document}